\newcommand{\bbC}{\mathbb{C}}
\newcommand{\bbQ}{\mathbb{Q}}
\newcommand{\bbR}{\mathbb{R}}
\newcommand{\bbT}{\mathbb{T}}
\newcommand{\bbZ}{\mathbb{Z}}
\newcommand{\calC}{\mathcal{C}}
\newcommand{\calL}{\mathcal{L}}
\newcommand{\calS}{\mathcal{S}}
\DeclareMathOperator{\id}{id}
\newcommand{\norm}[1]{\left\lVert #1 \right\rVert}
\newcommand{\modulus}[1]{\left\lvert #1 \right\rvert}
\newcommand{\spec}{\sigma}
\newcommand{\spr}{\operatorname{r}}
\theoremstyle{definition}
\newtheorem{definition}{Definition}[section]
\newtheorem{example}[definition]{Example}
\theoremstyle{plain}
\newtheorem{proposition}[definition]{Proposition}
\newtheorem{lemma}[definition]{Lemma}
\newtheorem{theorem}[definition]{Theorem}
\newtheorem{corollary}[definition]{Corollary}
\numberwithin{equation}{section}
\begin{document}

\title[Automatic continuity of semigroups]{Automatic time continuity of positive matrix and operator semigroups}
\author{Jochen Gl\"uck}
\address{Jochen Gl\"uck, University of Wuppertal, School of Mathematics and Natural Sciences, Gaußstr.\ 20, 42119 Wuppertal, Germany}
\email{glueck@uni-wuppertal.de}
\subjclass[2020]{47D06; 47B65}
\keywords{Positive operator semigroup; functional equation of the exponential function; automatic continuity; time regularity}
\date{\today}
\begin{abstract}
	We consider a matrix semigroup $T: [0,\infty) \to \bbR^{d \times d}$ 
	without assuming any measurability properties and show that, 
	if $T$ is bounded close to $0$ and $T(t) \ge 0$ entrywise for all $t$, 
	then $T$ is continuous. 
	This complements classical results for the scalar-valued case. 
	We also prove an analogous result if $T$ takes values in the positive operators 
	over a sequence space.
\end{abstract}

\maketitle

\section{Introduction} 
\label{section:introduction}

\subsection*{Motivation}

It is a classical theme of ideas that a function $T: [0,\infty) \to \bbR$ that satisfies the functional equation 
\begin{align}
	\label{eq:fe-scalar}
	T(0) = 1, 
	\qquad 
	T(t+s) = T(t)T(s) \quad \text{for all } s,t \in [0,\infty)
\end{align}
is, under mild conditions, given by $T(t) = e^{ta}$ for a number $a \in \bbR$ and all $t \in [0,\infty)$. 
For instance, this is true if $T$ is continuous at $0$. 
Alternatively, it suffices to assume that $T$ is measurable and is not constantly zero on $(0,\infty)$. 
We refer to \cite[Sections~I.1 and~VII.2]{EngelNagel2000} for a more thorough discussion of this theme of ideas. 
In a similar vein, the following result holds, where measurability is replaced with a boundedness assumption close to $0$.

\begin{proposition}[Continuity of scalar semigroups]
	\label{prop:scalar}
	Let $T: [0,\infty) \to \bbR$ satisfy the functional equation~\eqref{eq:fe-scalar} 
	and assume that $T(t) \not= 0$ for at least one $t \in (0,\infty)$. 
	If $\sup_{t \in [0,1]} \modulus{T(t)} < \infty$, then $T$ is continuous 
	(and hence $T(t) = e^{ta}$ for a number $a \in \bbR$ and all $t \in [0,\infty)$).
\end{proposition}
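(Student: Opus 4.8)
The plan is to show first that $T$ never vanishes, hence is strictly positive, and then to reduce the problem to the classical fact that an additive function bounded on a bounded interval is linear. The whole argument is elementary; its only subtle point is that no measurability is available, so continuity must be extracted from the functional equation and the boundedness alone.

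First I would rule out zeros by analyzing the set $Z := \{t \in [0,\infty) : T(t) = 0\}$. The functional equation makes $Z$ upward closed: if $T(t_0) = 0$ and $s \ge t_0$, then $T(s) = T(t_0)\,T(s - t_0) = 0$. On the other hand, $T(t_0) = T(t_0/2)^2$, so $T(t_0) = 0$ forces $T(t_0/2) = 0$ and, inductively, $T(t_0/2^n) = 0$ for every $n$. Combining the two observations, a single zero in $(0,\infty)$ would propagate to all of $(0,\infty)$, contradicting the hypothesis that $T(t) \ne 0$ for some $t \in (0,\infty)$. Hence $Z = \emptyset$, and since moreover $T(t) = T(t/2)^2 \ge 0$, I conclude $T(t) > 0$ for every $t \in [0,\infty)$.

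Next I would pass to $f := \ln \circ\, T \colon [0,\infty) \to \bbR$, which satisfies Cauchy's equation $f(0) = 0$ and $f(t+s) = f(t) + f(s)$. Writing $M := \sup_{t \in [0,1]} \modulus{T(t)} < \infty$, the identity $T(t)\,T(1-t) = T(1) > 0$ yields $T(t) \ge T(1)/M > 0$ for all $t \in [0,1]$, so $T$ is bounded away from both $0$ and $\infty$ on $[0,1]$; consequently $f$ is bounded there, say $\modulus{f(t)} \le K$ for $t \in [0,1]$.

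The crux, which I expect to be the main (if elementary) obstacle, is to upgrade this boundedness to continuity in the absence of measurability. Here I would use a purely algebraic scaling argument: for $t \in (0,1/n]$ one has $nt \in (0,1]$, and additivity gives $\modulus{f(t)} = \modulus{f(nt)}/n \le K/n$. Letting $n \to \infty$ shows $f(t) \to 0 = f(0)$ as $t \to 0^+$, i.e.\ $f$ is right-continuous at $0$; additivity then propagates this to continuity on all of $[0,\infty)$ via $f(t + h) - f(t) = f(h)$ and $f(t) - f(t-h) = f(h)$. A continuous additive function is linear, so $f(t) = at$ with $a := f(1)$, whence $T(t) = e^{f(t)} = e^{ta}$, which is continuous. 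Alternatively, once continuity at $0$ is secured one may simply invoke the classical scalar theory referenced in the introduction to identify $T$.
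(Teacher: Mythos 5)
Your proof is correct, and it takes a genuinely different route from the paper's. You reduce the problem to Cauchy's functional equation: after ruling out zeros, you take logarithms and observe that the reflection identity $T(t)T(1-t) = T(1)$ gives a \emph{two-sided} bound $T(1)/M \le T(t) \le M$ on $[0,1]$, so that $f = \ln \circ\, T$ is additive and bounded on $[0,1]$; the scaling $\modulus{f(t)} = \modulus{f(nt)}/n \le K/n$ then yields continuity at $0$ in one stroke, and linearity of $f$ follows classically. The paper instead stays entirely multiplicative: it uses only the upper bound $M$ to get $\limsup_{t \downarrow 0} T(t) \le 1$, and handles the lower direction by contradiction --- if $\liminf_{t \downarrow 0} T(t) < 1$, then powers of small values of $T$ force $T(1) = 0$. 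Your approach is arguably cleaner and more self-contained, producing the explicit form $T(t) = e^{at}$ directly, and it connects nicely to the paper's own Example on discontinuous additive functions (your argument shows exactly why no such pathology can survive a real positivity-plus-boundedness constraint, whereas it can on the unit circle). What the paper's multiplicative argument buys is that its proof pattern --- ``if the semigroup gets small near $0$, then $T(1)$ (or $T(t)x$) must vanish, contradiction'' --- is reused almost verbatim in the infinite-dimensional setting of Theorem~\ref{thm:sequences-pos}, where no logarithm is available; so the paper's choice is a deliberate rehearsal of the technique that generalizes, while yours is the sharper tool for the scalar case itself.
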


This observation motivates the theorems in the following sections, 
and we also use it in an argument below. 
In order to be more self-contained, we thus include a proof of the continuity of $T$ in Proposition~\ref{prop:scalar} 
at the end of the introduction.
Note that Proposition~\ref{prop:scalar} does not remain true if $T$ maps to $\bbC$ instead of $\bbR$, 
see the simple Example~\ref{exa:complex} below. 
By identifying each complex number with an orthogonal matrix in $\bbR^{2 \times 2}$ we can thus see 
that the proposition does not hold for matrix-valued functions $T$, either.

In this note we show that the proposition remains valid, though, 
if $T$ is matrix-valued and satisfies $T(t) \ge 0$ entrywise for each $t \in [0,\infty)$ 
(Theorem~\ref{thm:matrix-pos}).
More generally, a similar result remains true if each $T(t)$ is a positive linear operator on a sequence space 
(Theorem~\ref{thm:sequences-pos} and Corollary~\ref{cor:sequences-pos}).

\subsection*{Matrix and operator semigroups}

We use the following notation. 
For a Banach space $E$, let $\calL(E)$ denote the space of bounded linear operators on $E$. 
If $E = \bbR^n$, we identify $\calL(E)$ with the matrix space $\bbR^{n \times n}$ in the usual way. 
We call a mapping $T: [0,\infty) \to \calL(E)$ a \emph{semigroup} or, 
more strictly speaking, a \emph{one-parameter operator semigroup}, if 
\begin{align*}
	T(0) = \id, 
	\qquad 
	T(t+s) = T(t)T(s) \quad \text{for all } s,t \in [0,\infty)
	.
\end{align*}
From an algebraic point of view, such a map is a monoid homomorphism 
from the additive monoid $[0,\infty)$ to the multiplicative monoid $\calL(E)$, 
but in operator theory it is common to refer to it by the notion \emph{semigroup} that we will use here. 
If $T(t)x \to x$ as $t \to 0$ for every $x \in E$, 
then it follows that each orbit of $T$ is continuous everywhere on $[0,\infty)$ 
\cite[Proposition~I.5.3]{EngelNagel2000} 
and in such a case, $T$ is called a \emph{strongly continuous semigroup} or a \emph{$C_0$-semigroup}. 
Such semigroups play an important role in the study of linear autonomous evolution equations.

If $E$ is finite-dimensional, strong continuity of $T$ is, of course, equivalent to continuity with respect to the operator norm 
and hence, every $C_0$-semigroup $T$ on a finite-dimensional space $E$ is given by 
$T(t) = e^{tA}$ for an $A \in \calL(E)$ and all $t \in [0,\infty)$ 
\cite[Theorem~I.3.7]{EngelNagel2000}.

\subsection*{Related literature}

There are various results of the type that a suitable measurability assumption implies time continuity 
not only in the scalar-valued case, but also for operator-valued semigrops;
see for instance \cite[Section~10.2]{HillePhillip1957}, \cite[Theorem~II.1]{Chung1967} and \cite[Corollary~3.3]{Gerlach2024}.
Hence, the main point about our results in Sections~\ref{sec:matrix} and~\ref{sec:sequence} 
is that they do not a priori assume any measurability with respect to the time parameter.

Automatic continuity of semigroups is related 
to the question which bounded linear operators can be embedded into a semigroup; 
the latter question is for instance studied in \cite{EisnerRadl2022}. 
The famous Markov embedding problem from the theory of Markov chains 
(see e.g.\ \cite{BaakeSumner2024, CasanellasFernandezSanchezRocaLacostena2023, Kingman1962}) 
is a particular case thereof.

\subsection*{An example and a proof of Proposition~\ref{prop:scalar}}

It is well-known that there exists an additive function $\varphi: \bbR \to \bbR$ which is not continuous. 
By composing it with the exponential function along the imaginary line, 
one can get a counterexample to the assertion of Proposition~\ref{prop:scalar} for complex-valued $T$. 
As the exponential function is not bijective along the imaginary line, a bit of care is needed, though, 
to check that the composition is indeed discontinuous. 
For the sake of completeness we include the details of the argument in the following example.
We formulate the example for a function $T$ defined on $\bbR$ 
but it can, of course, be restricted to $[0,\infty)$.

\begin{example}
	\label{exa:complex}
	Let $\varphi: \bbR \to \bbR$ be a discontinuous additive function and 
	define
	\begin{align*}
		T: \bbR \to \bbC, 
		\qquad 
		t \mapsto e^{i \varphi(t)}
		.
	\end{align*} 
	Then $T$ satisfies the functional equation~\eqref{eq:fe-scalar}, but $T$ is not continuous at any point.
\end{example}

\begin{proof}
	Obviously, $T$ satisfies~\eqref{eq:fe-scalar}.
	Thus, it suffices to show that there exists one point at which $T$ is not continuous.
	So assume to the contrary that $T$ is continuous on $\bbR$. 
	Then there exists a number $a \in \bbR$ such that $T(t) = e^{iat}$ for all $t \in \bbR$. 
	Thus
	\begin{align}
		\label{eq:additive-function-in-grid}
		\varphi(t) - at \in 2\pi \bbZ
	\end{align}
	for each $t \in \bbR$. 
	Since $\varphi$ is additive, it follows by algebraic induction that $\varphi(qt) = q\varphi(t)$ 
	for each $t \in \bbR$ and each $q \in \bbQ$. 
	By substituting $qt$ for $t$ in~\eqref{eq:additive-function-in-grid} we thus obtain
	\begin{align*}
		q(\varphi(t) - at) \in 2\pi \bbZ
	\end{align*}
	for all $t \in \bbR$ and all $q \in \bbQ$. 
	If we fix $t$ and choose $q$ sufficiently small, we can conclude that $\varphi(t) - at = 0$. 
	So $\varphi(t) = at$ for each $t \in \bbR$, 
	which contradicts the choice of $\varphi$ as a discontinuous function.
\end{proof}

A simple proof of Proposition~\ref{prop:scalar} is as follows.

\begin{proof}[Proof of Proposition~\ref{prop:scalar}]
	It suffices to prove that $T$ is continuous at $0$.
	First note that the assumption $T(t) \not= 0$ for at least one $t \in (0,\infty)$ 
	together with the functional equation~\eqref{eq:fe-scalar} implies that $T(t) \not= 0$ for all $t \in (0,\infty)$. 
	Moreover, one has $T(t) = T(t/2)^2 > 0$ for all $t \in [0,\infty)$.
	
	Now set $M := \sup_{t \in [0,1]} T(t)$. 
	Since $M < \infty$ by assumption, it follows from the functional equation~\eqref{eq:fe-scalar} 
	that $\limsup_{t \downarrow 0} T(t) \le 1$. 
	Let us assume towards a contradiction that $\liminf_{t \downarrow 0} T(t) < 1$. 
	
	We claim that this implies $T(1) = 0$ (which thus gives a contradiction). 
	To see this, fix a number $\delta$ that is strictly between $\liminf_{t \downarrow 0} T(t)$ and $1$, 
	and let $\varepsilon > 0$ be arbitrary. 
	Choose an integer $n \ge 1$ such that $\delta^n \le \varepsilon$. 
	There exists a time $s \in [0, \frac{1}{n}]$ such that $T(s) \le \delta$. 
	Thus, $T(ns) \le \delta^n \le \varepsilon$. 
	Since $ns \le 1$, it follows that
	\begin{align*}
		T(1) = T(1-ns)T(ns) 
		\le 
		M \varepsilon.
	\end{align*}
	As $\varepsilon$ was arbitrary we conclude that $T(1) = 0$.
\end{proof}

\section{Automatic continuity of matrix semigroups}
\label{sec:matrix}

The following is our main result in finite dimensions. 
For a matrix $B \in \bbR^{n \times n}$ we write $B \ge 0$ if the entries of $B$ satisfy $B_{jk} \ge 0$ for all $j,k$.

\begin{theorem}[Continuity of positive matrix semigroups]
	\label{thm:matrix-pos}
	Let $T: [0,\infty) \to \bbR^{n \times n}$ be a semigroup with the following properties:
	\begin{enumerate}[label=\upshape(\arabic*)]
		\item 
		For at least one $t \in (0,\infty)$ the matrix $T(t)$ is invertible.
		
		\item 
		For each $t \in [0,\infty)$ one has $T(t) \ge 0$. 
		
		\item 
		The semigroup $T$ is bounded close to $0$, i.e.\ $M := \sup_{t \in [0,1]} \norm{T(t)} < \infty$.
	\end{enumerate}
	Then $T$ is continuous.
\end{theorem}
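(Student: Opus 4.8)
The plan is to reduce everything to the convergence $T(t) \to \id$ as $t \downarrow 0$ and to extract this from the structure of the set of cluster points of $T$ near $0$. First I would record two preliminaries. From~(1), invertibility of a single $T(t_0)$ forces invertibility of every $T(t)$: for $0 \le s \le t_0$ the identity $T(t_0) = T(s)T(t_0 - s)$ shows that $T(s)$ has a one-sided, hence two-sided, inverse, and then $T(t) = T(t/m)^m$ is invertible for all $t$ once $t/m \le t_0$. Second, the scalar function $d(t) := \det T(t)$ satisfies $d(0) = 1$ and $d(t+s) = d(t)d(s)$, is strictly positive since $d(t) = d(t/2)^2 > 0$, and is bounded on $[0,1]$ by $M^n$; hence Proposition~\ref{prop:scalar} applies and yields $d(t) \to 1$ as $t \downarrow 0$.

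Next I would set up the cluster set. By~(3) the family $\{T(t) : 0 < t \le 1\}$ is norm-bounded, hence relatively compact in $\bbR^{n \times n}$, so the set
\[
\calC := \bigcap_{\varepsilon > 0} \overline{\{T(t) : 0 < t \le \varepsilon\}}
\]
of cluster points of $T$ at $0$ is nonempty and compact. Every $P \in \calC$ is a limit $P = \lim_k T(s_k)$ with $s_k \downarrow 0$, so $P \ge 0$ by~(2) and $\det P = \lim_k d(s_k) = 1$ by the previous paragraph; in particular each element of $\calC$ is a nonnegative invertible matrix. I would then verify three closure properties by elementary limit arguments: (i) $\calC$ is closed under multiplication --- given $P, P' \in \calC$ realized along $s_k, s_k' \downarrow 0$, a diagonal choice of times $t_k := s_k + s_{j(k)}'$ with $\norm{T(t_k) - T(s_k)P'}$ small produces $T(t_k) \to PP'$ with $t_k \downarrow 0$; (ii) $P^m \in \calC$ for every $m \ge 1$, since $P^m = \lim_k T(m s_k)$; and (iii) for every $n \ge 1$ there is a root $Q \in \calC$ with $Q^n = P$, obtained from a convergent subsequence of $T(s_k/n)$. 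Thus $\calC$ is a compact divisible semigroup of nonnegative invertible matrices.

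The heart of the argument is to show that $\calC = \{\id\}$. Being a compact topological semigroup, $\calC$ contains an idempotent $E$; since $\det E = 1$ forces $E$ to be invertible, $E = \id$, so $\id \in \calC$. For any $P \in \calC$ the closed subsemigroup $\overline{\{P^m : m \ge 1\}} \subseteq \calC$ again contains $\id$, which yields $P^{m_j} \to \id$ along some $m_j \to \infty$ (or a finite cyclic orbit); multiplying by $P^{-1}$ gives $P^{m_j - 1} \to P^{-1}$, and since $\calC$ is closed we conclude $P^{-1} \in \calC$. Hence $\calC$ is a compact group of nonnegative matrices. Now positivity enters decisively: for $P \in \calC$ both $P \ge 0$ and $P^{-1} \ge 0$, and a nonnegative matrix with nonnegative inverse is necessarily a generalized permutation (monomial) matrix. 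The assignment sending each monomial matrix to its underlying permutation is a group homomorphism from $\calC$ into the symmetric group $S_n$; its image is a divisible subgroup of the finite group $S_n$ and is therefore trivial, so every $P \in \calC$ is a positive diagonal matrix. Finally, the positive diagonal matrices form a group isomorphic to $(\bbR,+)^n$ via the entrywise logarithm, and $(\bbR,+)^n$ has no nontrivial compact subgroup; thus the compact group $\calC$ collapses to $\{\id\}$.

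With $\calC = \{\id\}$, the relatively compact family $\{T(t) : 0 < t \le 1\}$ has the single cluster point $\id$ as $t \downarrow 0$, whence $T(t) \to \id$. This upgrades to continuity everywhere: right continuity at $t$ follows from $T(t+h) = T(t)T(h) \to T(t)$, and left continuity from $T(t-h) = T(t)T(h)^{-1} \to T(t)$, using that $T(h) \to \id$ together with the continuity of matrix inversion at $\id$. I expect the main obstacle to be the middle step --- proving that the cluster set $\calC$ is not merely a compact semigroup but a genuine group, and then squeezing it down to $\{\id\}$; this is exactly where invertibility (via the determinant and Proposition~\ref{prop:scalar}), the monomial structure forced by positivity, and divisibility all have to be combined. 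The softer ingredients, relative compactness and the reduction to $t \downarrow 0$, are routine by comparison.
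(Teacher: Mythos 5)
Your proof is correct, but it follows a genuinely different route from the paper's. The paper handles the ``limit object'' issue with universal nets: it fixes a universal net $t_j \to 0$ and uses compactness to define a whole limit \emph{semigroup} $S(s) = \lim_j T(st_j)$, $s \in [0,\infty)$ (the paper even remarks that plain sequences cannot work for this construction, since the subsequence achieving convergence would depend on $s$). You sidestep this entirely: instead of a limit semigroup you work with the cluster \emph{set} $\calC$ of $T$ at $0$, and recover the algebraic structure (closure under products, powers, and roots) by diagonal extraction arguments, which only need sequences and finite-dimensional compactness --- no ultrafilters. The second divergence is the positivity input. The paper's key lemma is spectral: by Perron--Frobenius, every unimodular eigenvalue of a bounded positive matrix semigroup is a root of unity of order at most $n$, whence the peripheral spectrum of each limit operator is $\{1\}$; combined with $\det = 1$ (via Proposition~\ref{prop:scalar}, an ingredient you share) and semisimplicity of unimodular eigenvalues of power-bounded matrices, this forces $S(s) = \id$. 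You instead use topological-semigroup and combinatorial arguments: Ellis--Numakura gives an idempotent, determinant $1$ makes it the identity, so $\calC$ is a compact \emph{group}; then $P \ge 0$ with $P^{-1} \ge 0$ forces monomial structure, divisibility kills the permutation part through the homomorphism to $S_n$, and compactness kills the diagonal part. Interestingly, this makes your argument a self-contained finite-dimensional analogue of the paper's \emph{infinite-dimensional} proof (Theorem~\ref{thm:sequences-pos}), which also passes to a compact group (via Jacobs--de Leeuw--Glicksberg) and then uses divisibility to trivialize it, citing \cite[Corollary~5.6(1)]{GlueckHaase2019}. What each approach buys: the paper's finite-dimensional proof is shorter once Perron--Frobenius is available; yours avoids both spectral theory (beyond determinants) and universal nets, at the cost of invoking the Ellis--Numakura idempotent lemma and the classical fact about nonnegative matrices with nonnegative inverses --- both standard, so there is no gap, but they should be cited or proved if this were written out in full.
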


The following observation was made in \cite[Corollary to Proposition~7]{Kingman1962}: 
let $T: [0,\infty) \to \bbR^{n \times n}$ be a semigroup that takes values in the stochastic matrices 
and such that $T(1)$ is invertible; 
then there exists a continuous semigroup $\tilde T: [0,\infty) \to \bbR^{n \times n}$ 
that also takes values in the stochastic matrices and satisfies $\tilde T(1) = T(1)$. 
Our Theorem~\ref{thm:matrix-pos} shows that, in fact, $T$ is itself continuous in this situation.

For the proof of Theorem~\ref{thm:matrix-pos} we need the following lemma 
which is a consequence of the Perron--Frobenius theorem. 
For a matrix $B \in \bbR^{n \times n}$ we denote its spectrum by $\spec(B) \subseteq \bbC$.
Let $\bbT := \{z \in \bbC: \, \modulus{z} = 1\}$ denote the complex unit circle.

\begin{lemma} 
	\label{lem:pf}
	Let $T: [0,\infty) \to \bbR^{n \times n}$ be a semigroup such that $T(t) \ge 0$ for each $t$ 
	and $\sup_{t \in [0,\infty)} \norm{T(t)} < \infty$. 
	Then $\spec(T(t)) \cap \bbT \subseteq \{1\}$ for each $t \in [0,\infty)$.
\end{lemma}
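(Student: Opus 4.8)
The plan is to fix $t > 0$, set $B := T(t)$, and combine two features of $B$: it is power bounded, and it is a nonnegative matrix, so Perron--Frobenius theory applies. The decisive extra leverage beyond a single matrix is that, inside the semigroup, $B$ has an $N$-th root $T(t/N)$ for every $N$, and I will use this together with a divisibility trick reminiscent of Example~\ref{exa:complex}.

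First I would observe that global boundedness yields power boundedness at each time: since $\norm{T(s)^k} = \norm{T(ks)} \le M := \sup_{r \ge 0} \norm{T(r)}$ for all $k$, Gelfand's formula gives $\spr(T(s)) \le 1$ for every $s \ge 0$. In particular, any eigenvalue of $T(s)$ lying on $\bbT$ is automatically a \emph{peripheral} eigenvalue, i.e.\ one whose modulus equals the spectral radius $\spr(T(s)) = 1$. Next the Perron--Frobenius / Frobenius theory of nonnegative matrices enters: for a nonnegative $n \times n$ matrix, every peripheral eigenvalue is $\spr$ times a root of unity, and passing to the Frobenius normal form shows that this root of unity has order at most the size of an irreducible diagonal block, hence at most $n$. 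Combining these two steps gives the uniform statement I am after: for \emph{every} $s \ge 0$, each $\lambda \in \spec(T(s)) \cap \bbT$ is a root of unity of order dividing $L := \operatorname{lcm}(1, 2, \dots, n)$, so that $\lambda^L = 1$.

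Finally I would run the divisibility argument. Assume towards a contradiction that some $\lambda \in \spec(B) \cap \bbT$ satisfies $\lambda \ne 1$. Writing $B = T(t/L)^L$ and invoking the spectral mapping theorem $\spec(C^L) = \{\mu^L : \mu \in \spec(C)\}$ for square matrices, I pick $\mu \in \spec(T(t/L))$ with $\mu^L = \lambda$. Then $\modulus{\mu} = 1$, so by the uniform statement $\mu$ is a root of unity of order dividing $L$, whence $\mu^L = 1$ and therefore $\lambda = 1$, a contradiction. Thus $\spec(T(t)) \cap \bbT \subseteq \{1\}$, and since $t > 0$ was arbitrary (the case $t = 0$ being trivial), the lemma follows.

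I expect the main obstacle to lie entirely in the second step: the crucial and least routine ingredient is the \emph{uniform} bound (by $n$) on the order of the peripheral roots of unity, which is exactly what lets the single exponent $N = L$ annihilate all candidate eigenvalues simultaneously. The weaker assertion that peripheral eigenvalues are merely roots of unity would not suffice, as a permutation matrix interchanging two coordinates is nonnegative and power bounded yet has $-1$ in its spectrum; it is only the semigroup structure, through the availability of $L$-th roots, that rules this out.
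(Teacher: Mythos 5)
Your proposal is correct and follows essentially the same route as the paper: power boundedness gives $\spr(T(s)) \le 1$, Perron--Frobenius theory gives that unimodular eigenvalues are roots of unity of order at most $n$, and the existence of roots of $T(t)$ inside the semigroup combined with the spectral mapping theorem forces every such eigenvalue to equal $1$ (you use $L = \operatorname{lcm}(1,\dots,n)$ where the paper uses $n!$, an immaterial difference). Your observation that any eigenvalue on $\bbT$ is automatically peripheral even slightly streamlines the paper's argument, which instead handles the case $\spr(T(t)) < 1$ by a separate preliminary case distinction.
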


\begin{proof}
	By assumption, each of the matrices $T(t)$ is power bounded, 
	i.e.\ it satisfies $\sup_{n \ge 1} \norm{T(t)^n} < \infty$; 
	hence, its spectral radius $\spr(T(t))$ is at most $1$.
	 
	If there exists a time $t \in (0,\infty)$ such that the spectral radius of $T(t)$ satisfies $\spr(T(t)) < 1$, 
	then the boundedness of $T$ implies that $T(s) \to 0$ as $s \to \infty$ and hence, 
	$T(s)^n \to 0$ as $n \to \infty$ for each $s \in (0,\infty)$. 
	Thus, $\spr(T(s)) < 1$ for each $s \in (0,\infty)$ and hence, the claim of the lemma is true. 
	
	So let us now assume that $r(T(t)) = 1$ for each $t \in (0,\infty)$. 
	Fix $t \in (0,\infty)$ and an eigenvalue $\lambda$ of $T(t)$ of modulus $\modulus{\lambda} = 1$.
	It follows from the Perron--Frobenius theorem that $\lambda$ is a root of unity 
	and that $\lambda^k$ is also an eigenvalue of $T(t)$ for all $k \in \bbZ$ 
	\cite[Theorem~I.2.7]{Schaefer1974}.
	Take the minimal integer $\ell \ge 1$ that satisfies $\lambda^\ell = 1$. 
	Since $T(t)$ has at most $n$ eigenvalues, it follows that $\ell \le n$. 
	So in particular, $\lambda^{n!} = 1$. 
	Note that $n$ is the dimension of the matrix space $\bbR^{n \times n}$, 
	so it does not depend on $t$ nor on the choice of $\lambda$.
	
	Since $T(t) = T(t/n!)^{n!}$, it follows from the spectral mapping theorem that there exists an eigenvalue $\mu$ of $T(t/n!)$ 
	such that $\lambda = \mu^{n!}$. 
	One has $\modulus{\mu} = 1$ and since the argument from the previous paragraph 
	applies to all times in $(0,\infty)$ and to all unimodular eigenvalues, 
	we conclude that $\lambda = \mu^{n!} = 1$, as claimed.
\end{proof}

As a second ingredient for the proof of Theorem~\ref{thm:matrix-pos} we use the following concept. 
A net $(x_j)$ in a set $X$ is called \emph{universal} if for each subset $S \subseteq X$ 
the net $(x_j)$ is either eventually contained in $S$ or eventually contained in $X \setminus S$. 
By using the existence of ultrafilters one can show that every net has a subnet which is a universal net. 
Employing this fact one can show that a topological space is compact if and only if every universal net in it converges.
Finally, for a function $f: X \to Y$ between two sets and a universal net $(x_j)$ in $X$, 
one readily checks that the net $\big(f(x_j)\big)$ is universal, too.

\begin{proof}[Proof of Theorem~\ref{thm:matrix-pos}]
	Since every net has a subnet that is a universal net, 
	it suffices to show that $T(t_j) \to \id$ for each universal net in $(0,\infty)$ that converges to $0$; 
	so let $(t_j)$ be such a universal net.
	
	For each $s \in [0,\infty)$ the net $(T(st_j))_{j}$ is universal and norm bounded by $M$; 
	by the compactness of bounded closed balls in finite-dimensional spaces, 
	the net thus converges to a matrix $S(s) \in \bbR^{n \times n}$ that satisfies $S(s) \ge 0$ and $\norm{S(s)} \le M$.
	One readily checks that $(S(s))_{s \in [0,\infty)}$ is a semigroup, 
	so it follows from Lemma~\ref{lem:pf} that $\spec(S(s)) \cap \bbT \subseteq \{1\}$ for each $s$.
	
	Now we show that each $S(s)$ has determinant $1$. 
	To this end, note that
	\begin{align*}
		[0,\infty) \to \bbR, 
		\qquad 
		t \mapsto \det(T(t)).
	\end{align*}
	is a semigroup and is thus continuous by Proposition~\ref{prop:scalar}.
	Hence, $\det(T(t)) \to 1$ for $t \to 0$ and thus, $\det(S(s)) = \lim_j \det(T(t_js)) = 1$ for each $s \in [0,\infty)$. 
	
	Let $s \in [0,\infty)$. 
	Since $S(s)$ is power bounded and has determinant $1$, 
	all eigenvalues of $S(s)$ are located in the unit circle $\bbT$ and therefore, $\spec(S(s)) = \{1\}$.
	Again since $S(s)$ is power-bounded, the eigenvalue $1$ of $S(s)$ is also semi-simple, so $S(s) = \id$. 
	In particular, $S(1) = \id$, so $T(t_j) \to \id$.
\end{proof}

One might wonder why we need $(t_j)$ in the proof to be a universal net instead of a sequence. 
For a sequence $(t_j)$ we would need to consider a subsequence of $(T(st_j))_{j}$ to obtain convergence 
-- but the subsequence would then depend on $s$ and thus, 
one could not expect $(S(s))_{s \in [0,\infty)}$ to be a semigroup.

\section{Automatic continuity on sequence spaces}
\label{sec:sequence}

The following result is a version of Theorem~\ref{thm:matrix-pos} on infinite-dimensional sequence spaces.
The most general version of the result can be stated on so-called \emph{atomic Banach lattices}, 
which are sometimes also called \emph{discrete Banach lattices}. 
Yet, readers not interested in this concept can simply ignore this notion 
and think of sequence spaces such as $c$ or $c_0$ or $\ell^p$ for $p \in [1,\infty]$, 
which are -- when endowed with the entrywise partial order -- 
the most important examples of atomic Banach lattices. 
The definition of an atomic Banach lattice can for instance be found 
in \cite[Section~2.5, p.\,113]{MeyerNieberg1991} or \cite[Section~0.2, p.\,18]{Wnuk1999}. 
We refrain from discussing, and even from recalling, the precise definition here 
since we will not explicitly work with properties of those spaces 
-- they only occur in the following theorem and its proof because we use \cite[Corollary~5.6(1)]{GlueckHaase2019},  
which tells us that groups of positive operators on such spaces are trivial under appropriate assumptions.

For a Banach lattice $E$, an operator $B \in \calL(E)$ is called \emph{positive} if $f \ge 0$ implies $Bf \ge 0$ 
for each $f \in E$. 

\begin{theorem}[Automatic continuity on sequence spaces]
	\label{thm:sequences-pos}
	Let $E$ be one of the sequence spaces $c_0$, $c$ or $\ell^p$ for $p \in [1,\infty]$ or, more generally, 
	an atomic Banach lattice.
	Let $T: [0,\infty) \to \calL(E)$ be a positive semigroup with the following properties: 
	\begin{enumerate}[label=\upshape(\arabic*)]
		\item 
		For each non-zero $x \in E$ there exists at least one $t \in (0,\infty)$ such that $T(t)x \not= 0$.
		
		\item 
		For each $t \in [0,\infty)$ one has $T(t) \ge 0$.
		
		\item\label{thm:sequences-pos:itm:compact} 
		For each $x \in E$ the local orbit $\{T(t)x : \, t \in [0,1]\}$ is relatively compact.
	\end{enumerate}
	Then $T$ is strongly continuous, i.e.\ a $C_0$-semigroup.
\end{theorem}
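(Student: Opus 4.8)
The plan is to follow the architecture of the proof of Theorem~\ref{thm:matrix-pos} as closely as possible, replacing the determinant argument (which has no analogue in infinite dimensions) by the rigidity of positive groups supplied by \cite[Corollary~5.6(1)]{GlueckHaase2019}. First I would reduce the assertion to a statement about universal nets. By \cite[Proposition~I.5.3]{EngelNagel2000} it suffices to show that $T(t)x \to x$ as $t \downarrow 0$ for every $x \in E$. If this failed for some $x$, there would be a sequence $t_n \downarrow 0$ and an $\varepsilon > 0$ with $\norm{T(t_n)x - x} \ge \varepsilon$ for all $n$; passing to a universal subnet $(t_j)$ and proving $T(t_j)x \to x$ along it would give a contradiction. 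Hence it is enough to show that $T(t_j) \to \id$ strongly for every universal net $(t_j)$ in $(0,\infty)$ with $t_j \to 0$.

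Next I would construct a limit semigroup exactly as in the finite-dimensional proof. Condition~\ref{thm:sequences-pos:itm:compact} together with the uniform boundedness principle yields $M := \sup_{t \in [0,1]} \norm{T(t)} < \infty$. Fixing a universal net $(t_j)$ with $t_j \to 0$ and $s \in [0,\infty)$, the net $(T(st_j)x)_j$ is universal, and for $j$ large it lies in the relatively compact set $\{T(t)x : t \in [0,1]\}$; it therefore converges, and I set $S(s)x := \lim_j T(st_j)x$. One checks that each $S(s)$ is a positive operator with $\norm{S(s)} \le M$ (the cone is closed and $\norm{T(st_j)x}\le M\norm{x}$ eventually) and that $(S(s))_{s \ge 0}$ is a semigroup with relatively compact orbits. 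As in the matrix case, the point of using a universal rather than a merely convergent net is that the \emph{same} net works simultaneously for all $s$, so that the semigroup law is preserved in the limit.

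The crux is to upgrade $(S(s))_{s \ge 0}$ to a \emph{group} of positive operators. Since the orbits of $S$ are relatively compact, the operator family $\{S(s) : s \ge 0\}$ is relatively compact in the weak operator topology, so the Jacobs--de Leeuw--Glicksberg theory applies and splits $E$ into a reversible part, on which $S$ restricts to a group, and a stable part $E_{\mathrm{s}} = \{x : 0 \in \overline{\{S(s)x : s \ge 0\}}\}$. I would then argue that $E_{\mathrm{s}} = \{0\}$, and this is the only place where the non-degeneracy hypothesis~(1) enters. The difficulty — which I expect to be the main obstacle — is that~(1) is a statement about the \emph{original} semigroup $T$, whereas $E_{\mathrm{s}}$ is defined through the \emph{limit} semigroup $S$, so the real work is to transfer non-degeneracy across the limit: the associated Jacobs--de Leeuw--Glicksberg projection is positive and arises as a strong limit of operators $T(\tau)$, and one must combine this with the atomic lattice structure to conclude that it is injective. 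Granting $E_{\mathrm{s}} = \{0\}$, all $S(s)$ are invertible positive operators, and $(S(s))_{s \ge 0}$ extends to a bounded positive group.

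Finally I would invoke \cite[Corollary~5.6(1)]{GlueckHaase2019}, after verifying its spectral hypothesis (the peripheral part, which positivity and boundedness should pin down to $\{1\}$, in analogy with Lemma~\ref{lem:pf}): a bounded group of positive operators on an atomic Banach lattice is trivial under these assumptions, forcing $S(s) = \id$ for all $s$. In particular $S(1) = \id$, i.e.\ $T(t_j) = T(1 \cdot t_j) \to \id$ strongly, and since the universal net $(t_j)$ was arbitrary this establishes strong continuity at $0$ and hence that $T$ is a $C_0$-semigroup.
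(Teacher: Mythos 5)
Your architecture matches the paper's (reduction to universal nets, construction of the limit semigroup $S$, Jacobs--de Leeuw--Glicksberg decomposition, then \cite[Corollary~5.6(1)]{GlueckHaase2019}), but the two steps you defer are precisely where the proof is decided, and in both you point in a direction that is either unnecessary or would fail. The step you call ``the main obstacle'' --- showing the stable part of the JdLG decomposition of $S$ vanishes --- is left unproved, and the strategy you sketch (prove injectivity of the JdLG projection by combining its positivity with atomicity) is not needed and not how it goes; atomicity plays no role in this step. The transfer of hypothesis~(1) from $T$ to $S$ is elementary once you exploit that $S(s)$ is the strong limit of $T(st_j)$ along the \emph{same} net for every $s$. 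Concretely: suppose $x \neq 0$ and $0$ lies in the closure of $\{S(s)x : s \ge 0\}$. By~(1) and the semigroup law pick $t \in (0,1]$ with $T(t)x \neq 0$, and let $\varepsilon > 0$. Choose $s$ with $\norm{S(s)x} \le \varepsilon$; since $T(st_j)x \to S(s)x$ and $t_j \to 0$, there is an index $j$ with $st_j \le t$ and $\norm{T(st_j)x} \le 2\varepsilon$. Then
\begin{align*}
	\norm{T(t)x}
	\le
	\norm{T(t - st_j)}\,\norm{T(st_j)x}
	\le
	2M\varepsilon,
\end{align*}
where $M := \sup_{t \in [0,1]} \norm{T(t)} < \infty$ by the uniform boundedness principle and assumption~\ref{thm:sequences-pos:itm:compact}. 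As $\varepsilon$ was arbitrary, $T(t)x = 0$, a contradiction. Without this (or an equivalent) argument, your proof has a hole exactly where you flagged one.

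The second gap is in how you plan to apply \cite[Corollary~5.6(1)]{GlueckHaase2019}. You propose to verify a spectral hypothesis (peripheral spectrum equal to $\{1\}$) ``in analogy with Lemma~\ref{lem:pf}'', but the proof of Lemma~\ref{lem:pf} is intrinsically finite-dimensional: it bounds the order of the root of unity $\lambda$ by the number of eigenvalues of an $n \times n$ matrix, and no such bound exists for operators on $E$. What the cited corollary actually requires --- and what the paper verifies --- is that the strongly compact group $\calC$ produced by the JdLG decomposition is \emph{divisible}: for $C \in \calC$ and $n \ge 1$ there is $D \in \calC$ with $D^n = C$. This holds because $S(s) = S(s/n)^n$ for orbit elements and compactness passes $n$-th roots to limit points. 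Divisibility together with atomicity of $E$ then forces $\calC = \{\id\}$, hence $S(1) = \id$ and $T(t_j) \to \id$ strongly; no infinite-dimensional Perron--Frobenius argument is available or needed.
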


\begin{proof}
	We start similarly as in the proof of Theorem~\ref{thm:matrix-pos}.
	It suffices to show that $\big(T(t_j)\big)$ converges strongly to $\id$ for each universal net in $(0,\infty)$ that converges to $0$; 
	so let $(t_j)$ be such a universal net.
	
	It follows from the assumption that the set $\{T(t): \, t \in [0,1]\}$ is relatively compact 
	with respect to the strong operator topology 
	(see e.g.\ \cite[Corollary~A.5]{EngelNagel2000}); 
	let $\calS$ denote the closure of this set in $\calL(E)$ with respect to the strong operator topology.
	For each $s \in [0,\infty)$ the universal net $\big(T(s t_j)\big)$ is eventually contained in the compact set $\calS$
	and thus converges strongly to an operator $S(s) \in \calS$. 
	The mapping $S: [0,\infty) \to \calL(E)$ is a semigroup 
	since operator multiplication is jointly strongly continuous on bounded subsets of $\calL(E)$.
	
	Next we show that, for each non-zero $x \in X$, the closure of the orbit $\{S(s): \, s \in [0,\infty)\}$ does not contain $0$. 
	Indeed, assume the contrary for some $0 \not= x \in X$. 
	By assumption we can find a time $t \in (0,\infty)$ such that $T(t)x \not= 0$. 
	By the semigroup law, we can even choose $t$ as small as we wish to, say $t \in (0,1]$.
	Let $\varepsilon > 0$ and set $M := \sup \{\norm{T(t)}: \, t \in [0,1]\}$. 
	Then $M < \infty$ by the uniform boundedness principle. 
	As assumed at the beginning of the paragraph, 
	there exists an $s \in [0,\infty)$ such that $\norm{S(s)x} \le \varepsilon$ 
	and hence, we can find an index $j$ such that $st_j \le t$ and $\norm{T(st_j)x} \le 2 \varepsilon$. 
	Therefore,
	\begin{align*}
		\norm{T(t)x} 
		\le 
		\norm{T(t-st_j)} \norm{T(st_j)x} 
		\le 
		2M\varepsilon
		,
	\end{align*}
	where we used that $0 \le t -st_j \le t \le 1$ and the definition of $M$. 
	Since $\varepsilon$ was arbitrary, it follows that $T(t)x = 0$, a contradiction.
	
	As each operator $S(s)$ is contained in the set $\calS$ which is compact with respect to the strong operator topology,
	one can apply the celebrated Jacobs--de Leeuw--Glicksberg decomposition 
	(see e.g.\ \cite[Section~2.4]{Krengel1985} or \cite[Section~16.3]{EisnerFarkasHaaseNagel2015})
	to the strong operator closure $\calC$ of the set $\{S(s): \, s \in [0,\infty)\}$. 
	The fact proved in the previous paragraph implies that the stable part of this decomposition is $0$, 
	so $\calC$ is in fact a strongly compact subgroup of the group of lattice isomorphisms on $E$.
	One readily checks that the group $\calC$ is \emph{divisible}, 
	i.e., for every $C \in \calC$ and every integer $n \ge 1$ there exists a $D \in \calC$ 
	such that $D^n = C$. 
	As proved in \cite[Corollary~5.6(1)]{GlueckHaase2019} the divisibility of $\calC$ 
	together with the fact that $E$ is atomic implies that $\calC$ consists of the identity operator only. 
	So in particular, $S(s) = \id$ for each $s \in [0,\infty)$ and hence, 
	$S(1) = \id$, which proves that $T(t_j) \to \id$ strongly.
\end{proof}

The sequence spaces $\ell^p$ for $p \in [1,\infty)$ and $c_0$ have, 
in contrast to the sequence spaces $\ell^\infty$ and $c$, the following nice property: 
for each $y \ge 0$ in such a space the so-called \emph{order interval} 
$[0,y] := \{x: \, 0 \le x \le y\}$ is compact. 
More generally, this is true in all atomic Banach lattices that have, in addition, 
\emph{order continuous norm}, which means that every decreasing net with infimum $0$ converges in norm to $0$; 
see e.g.\ \cite[Theorem~6.1(ii) and~(v)]{Wnuk1999} for a proof.

On such spaces, the compactness assumption from the previous theorem is implied by a local boundedness condition at $0$ 
together with the existence of a certain fixed point of the semigroup. 
By a \emph{fixed point} or \emph{fixed vector} of a semigroup $T: [0,\infty) \to \calL(E)$ 
on a Banach space $E$ we mean a vector $x \in E$ such that $T(t)x = x$ for all $t \in [0,\infty)$.
An element $x \ge 0$ of a Banach lattice $E$ is called a \emph{quasi-interior point} of the cone $E_+$ 
if the linear span of the order interval $[0,x]$ is norm dense in $E$. 
An element $x$ of $c_0$ or of $\ell^p$ for $p \in [1,\infty)$ can be checked 
to be a quasi-interior point of the cone if and only if all its entries $x_k$ are strictly positive.

\begin{corollary}[Automatic continuity on sequence spaces]
	\label{cor:sequences-pos}
	Let $E$ be one of the sequence spaces $c_0$ or $\ell^p$ for $p \in [1,\infty)$ or, more generally, 
	an atomic Banach lattice with order continuous norm.
	Let $T: [0,\infty) \to \calL(E)$ be a positive semigroup with the following properties: 
	\begin{enumerate}[label=\upshape(\arabic*)]
		\item 
		For each non-zero $x \in E$ there exists at least one $t \in (0,\infty)$ such that $T(t)x \not= 0$.
		
		\item 
		For each $t \in [0,\infty)$ one has $T(t) \ge 0$.
		
		\item\label{cor:sequences-pos:itm:bdd} 
		The semigroup $T$ is bounded closed to $0$, i.e.\ $M := \sup_{t \in [0,1]} \norm{T(t)} < \infty$.
		
		\item 
		There exists a fixed vector $h$ of $T$ that is a quasi-interior point of $E_+$.
	\end{enumerate}
	Then $T$ is strongly continuous, i.e.\ a $C_0$-semigroup.
\end{corollary}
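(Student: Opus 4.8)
The plan is to deduce the hypotheses of Theorem~\ref{thm:sequences-pos} from those of the corollary and then to apply that theorem. Conditions (1) and (2) are literally the same in both statements, so the only point to check is the relative compactness of each local orbit $\{T(t)x : t \in [0,1]\}$, i.e.\ condition~\ref{thm:sequences-pos:itm:compact}. The key feature of the present setting, as recalled in the paragraph preceding the corollary, is that every order interval $[0,y]$ with $y \ge 0$ is norm compact, since $E$ is atomic and has order continuous norm.

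First I would treat vectors in the order interval $[0,h]$. If $0 \le x \le h$, then positivity of $T(t)$ gives $0 \le T(t)x$, while applying $T(t)$ to $h - x \ge 0$ and using $T(t)h = h$ yields $T(t)x \le h$. Hence the whole orbit $\{T(t)x : t \in [0,\infty)\}$ is contained in the compact set $[0,h]$ and is in particular relatively compact. Next I would extend this to the linear span of $[0,h]$: if $x = \sum_{i=1}^m c_i x_i$ with each $x_i \in [0,h]$, then $\{(T(t)x_1, \dots, T(t)x_m) : t \in [0,1]\}$ lies in the compact product $[0,h]^m$, and the orbit of $x$ is its image under the continuous linear map $(y_1,\dots,y_m) \mapsto \sum_i c_i y_i$; it is therefore relatively compact as well.

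Finally I would pass from the dense subspace $\linSpan [0,h]$ to all of $E$. Let $F$ denote the set of all $x \in E$ whose local orbit $\{T(t)x : t \in [0,1]\}$ is relatively compact. I would show that $F$ is norm closed, and this is the step where the local boundedness assumption~\ref{cor:sequences-pos:itm:bdd} is used: if $x_n \to x$ with $x_n \in F$, then $\norm{T(t)x - T(t)x_n} \le M \norm{x - x_n}$ uniformly in $t \in [0,1]$, so the totally bounded orbit of $x_n$ approximates the orbit of $x$ to within $M \norm{x-x_n}$ in Hausdorff distance; hence the orbit of $x$ is totally bounded and, $E$ being complete, relatively compact. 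Since $h$ is a quasi-interior point of $E_+$, the subspace $\linSpan [0,h]$ is dense in $E$ and is contained in $F$ by the previous step, so the closedness of $F$ forces $F = E$. With condition~\ref{thm:sequences-pos:itm:compact} verified, Theorem~\ref{thm:sequences-pos} applies and shows that $T$ is a $C_0$-semigroup. I expect the closedness of $F$ — the uniform approximation of orbits via the bound $M$ — to be the only step requiring genuine care, the remainder being a matter of unwinding the definitions.
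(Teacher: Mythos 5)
Your proposal is correct and follows essentially the same route as the paper: verify assumption~\ref{thm:sequences-pos:itm:compact} of Theorem~\ref{thm:sequences-pos} by observing that the compact order interval $[0,h]$ is invariant under the positive operators $T(t)$ (since $h$ is a fixed point), extend relative compactness of local orbits to the linear span of $[0,h]$, and then pass to all of $E$ by density and the uniform bound $M$. The only difference is that where the paper cites \cite[Corollary~A.5]{EngelNagel2000} for the final density step, you prove that statement directly via the closedness of the set of vectors with totally bounded local orbit, which is exactly the content of that cited result.
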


\begin{proof}
	We show that the assumptions of Theorem~\ref{thm:sequences-pos} are satisfied; 
	it suffices to check assumption~\ref{thm:sequences-pos:itm:compact} of the theorem. 
	For every $x$ in the order interval $[0,h]$ one has $T(t)x \in [0,h]$ for all $t \in [0,1]$ 
	since $h$ is a fixed point and since each $T(t)$ is positive. 
	As $[0,h]$ is compact we conclude that $\{T(t)x: \, t \in [0,1]\}$ is relatively compact 
	for each $x \in [0,h]$ and hence, even for each $x$ in the linear span of $[0,h]$. 
	This span is dense in $E$ by assumption and thus, the boundedness condition 
	in~\ref{cor:sequences-pos:itm:bdd} implies that $\{T(t)x: \, t \in [0,1]\}$ is relatively compact 
	even for each $x \in E$ \cite[Corollary~A.5]{EngelNagel2000}. 
\end{proof}

\subsection*{Acknowledgements} 

I am indebted to Daniel Lenz who suggested to me several years ago 
that a result along the lines of Corollary~\ref{cor:sequences-pos} might be true.

\bibliographystyle{plain}
\bibliography{literature}

\begin{thebibliography}{10}

\bibitem{BaakeSumner2024}
Michael Baake and Jeremy Sumner.
\newblock Embedding of {Markov} matrices for {{\(d \leqslant 4\)}}.
\newblock {\em J. Math. Biol.}, 89(2):45, 2024.
\newblock Id/No 23.

\bibitem{CasanellasFernandezSanchezRocaLacostena2023}
Marta Casanellas, Jes{\'u}s Fern{\'a}ndez-S{\'a}nchez, and Jordi
  Roca-Lacostena.
\newblock The embedding problem for {Markov} matrices.
\newblock {\em Publ. Mat., Barc.}, 67(1):411--445, 2023.

\bibitem{Chung1967}
Kai~Lai Chung.
\newblock {\em Markov chains with stationary transition probabilities. 2nd ed},
  volume 104 of {\em Grundlehren Math. Wiss.}
\newblock Springer, Cham, 1967.

\bibitem{EisnerFarkasHaaseNagel2015}
Tanja Eisner, B{\'a}lint Farkas, Markus Haase, and Rainer Nagel.
\newblock {\em Operator theoretic aspects of ergodic theory}, volume 272 of
  {\em Grad. Texts Math.}
\newblock Cham: Springer, 2015.

\bibitem{EisnerRadl2022}
Tanja Eisner and Agnes Radl.
\newblock Embeddability of real and positive operators.
\newblock {\em Linear Multilinear Algebra}, 70(19):3747--3767, 2022.

\bibitem{EngelNagel2000}
Klaus-Jochen Engel and Rainer Nagel.
\newblock {\em One-parameter semigroups for linear evolution equations}, volume
  194 of {\em Grad. Texts Math.}
\newblock Berlin: Springer, 2000.

\bibitem{Gerlach2024}
Moritz Gerlach.
\newblock A generalization of {L{\'e}vy}'s theorem on positive matrix
  semigroups.
\newblock {\em Anal. Math.}, 50(4):1019--1032, 2024.

\bibitem{GlueckHaase2019}
Jochen Gl{\"u}ck and Markus Haase.
\newblock Asymptotics of operator semigroups via the semigroup at infinity.
\newblock In {\em Positivity and noncommutative analysis. Festschrift in honour
  of Ben de Pagter on the occasion of his 65th birthday. Based on the workshop
  ``Positivity and Noncommutative Analysis'', Delft, The Netherlands, September
  26--28, 2018}, pages 167--203. Cham: Birkh{\"a}user, 2019.

\bibitem{HillePhillip1957}
Einar Hille and Ralph~S. Phillips.
\newblock {\em Functional analysis and semigroups. {Rev}. ed}, volume~31 of
  {\em Colloq. Publ., Am. Math. Soc.}
\newblock American Mathematical Society (AMS), Providence, RI, 1957.

\bibitem{Kingman1962}
John F.~C. Kingman.
\newblock The imbedding problem for finite {Markov} chains.
\newblock {\em Z. Wahrscheinlichkeitstheor. Verw. Geb.}, 1:14--24, 1962.

\bibitem{Krengel1985}
Ulrich Krengel.
\newblock {\em Ergodic theorems. {With} a supplement by {Antoine} {Brunel}},
  volume~6 of {\em De Gruyter Stud. Math.}
\newblock De Gruyter, Berlin, 1985.

\bibitem{MeyerNieberg1991}
Peter Meyer-Nieberg.
\newblock {\em Banach lattices}.
\newblock Universitext. Berlin etc.: Springer-Verlag, 1991.

\bibitem{Schaefer1974}
Helmut~H. Schaefer.
\newblock {\em Banach lattices and positive operators}, volume 215 of {\em
  Grundlehren Math. Wiss.}
\newblock Springer, Cham, 1974.

\bibitem{Wnuk1999}
Witold Wnuk.
\newblock {\em Banach lattices with order continuous norms}.
\newblock Warsaw: Polish Scientific Publishers PWN, 1999.

\end{thebibliography}

\end{document}